\documentclass[12pt,twoside,reqno]{amsart}
\usepackage{pifont}
\usepackage{caption}
\usepackage{amsthm}
\usepackage{amsfonts}
\usepackage{amssymb}
\usepackage{latexsym}
\usepackage{amstext}
\usepackage{array}
\usepackage{graphicx}
\usepackage{multirow}
\usepackage{amsmath}
\usepackage{enumerate}
  
%\topmargin= 27pt
 %Tabular row height

\date{}
\pagestyle{plain} \textheight= 21 true cm \textwidth =15 true cm
\allowdisplaybreaks[4] \footskip=15pt
\renewcommand{\uppercasenonmath}[1]{}

\topmargin=27pt \evensidemargin0pt \oddsidemargin0pt
%\headheight7pt
%\headsep12pt
%\marginparwidth0pt
%\marginparsep0pt
%\footskip0pt
%\footnotesep0pt

\newtheorem{thm}[subsection]{Theorem}
\newtheorem{cor}[subsection]{Corollary }
\newtheorem{Def}[subsection]{Definition}
\newtheorem{lem}[subsection]{Lemma}
\newtheorem{remark}[subsection]{Remark}

\newtheorem{prop}[subsection]{Proposition}
\newtheorem{exm}[subsection]{Example}
\newtheorem{conj}[subsection]{Conjecture}
  
\newcommand{\bthm}{\begin{thm} }
\newcommand{\ethm}{\end{thm} }
\newcommand{\bconj}{\begin{conj} }
\newcommand{\econj}{\end{conj} }
\newcommand{\bpro}{\begin{prop}}
\newcommand{\epro}{\end{prop}}
\newcommand{\bdf}{\begin{Def}}
\newcommand{\edf}{\end{Def}}
\newcommand{\bexm}{\begin{exm}}
\newcommand{\eexm}{\end{exm}}
\newcommand{\blem}{\begin{lem}}
\newcommand{\elem}{\end{lem}}
\newcommand{\bpf}{\begin{proof}}
\newcommand{\epf}{\end{proof}}
\newcommand{\bcor}{\begin{cor}}
\newcommand{\ecor}{\end{cor}}
\newcommand{\ba}{\begin{array}}
\newcommand{\ea}{\end{array}}
\newcommand{\bea}{\begin{eqnarray}}
\newcommand{\eea}{\end{eqnarray}}

\newcommand{\brem}{\begin{remark}}
\newcommand{\erem}{\end{remark}}
\newcommand{\nn}{\nonumber}
\newcommand{\ie}{\emph{i.e.}}

\def\defeq{{\,\stackrel{\Delta}{=}}\,}
\def\eg{{\it e.g.,\ \/}}

\begin{document}
\begin{center}
{{\large  \bf  Generalization and Alternative Proof of Two Identities Posed by Sun}}\\[0.2em]

%\vskip 0.8cm
 %{\small  {\ding {172}}Hourong Qin}
 {\small  Keqin Liu}

{ \small School of Mathematics and Physics, Xi'an Jiaotong-Liverpool University, China\\Jiangsu National Center for Applied Mathematics, China\\keqin.liu@xjtlu.edu.cn}\\

\end{center}

\textbf{Abstract.} We study two identities involving roots of unity and determinants of Hermitian matrices which have been recently proved by using the famous eigenvector-eigenvalue identity for normal matrices. In this paper, we extend these identities to a more general form by considering the class of circulant matrices. Furthermore, we give an alternative proof of Sun's identities independent of the eigenvector-eigenvalue identity, where our strategy is built upon the similarity of an unnecessarily normal matrix to a particular matrix with integer eigenvalues, derived from the Fourier transform vectors.

\textbf{Key words:\hspace{0.1cm}} trigonometric identities; circulant matrices; permutations of integer eigenvalues; Fourier vectors

\textbf{MSC2010 subject classification:\hspace{0.1cm}} 11C20; 15A18; 42A16; 11A07; 05A19

\section{Introduction}
In 2018, Zhi-Wei Sun proposed a system of open problems in number theory and formalized numerous conjectural identities with meaningful arithmetic properties \cite{Sun1,Sun2,Sun3,Sun4,Sun5}. Identities~\eqref{eqn:odd1} and~\eqref{eqn:odd2} about roots of unity and derangements were conjectured by Sun in \cite{Sun4,Sun6}. Recently, Guo {\em et al.} proved \eqref{eqn:odd1} in \cite{Guo} based on the famous eigenvector-eigenvalue identity~\eqref{eqn:EigenEqual} for normal matrices \cite{DentonEtal}. With the same technique, Wang and Sun proved \eqref{eqn:odd2} in \cite{Sun6}.

Let $D(n)$ denote the set of all derangements $\tau$ of indices $j=1,\cdots,n$ such that $\tau(j)\neq j$ for all $j=1,\cdots,n$. Let~$\zeta$ a primitive $n$-th root of unity in the complex field $\mathbb{C}$.

\begin{thm}\label{thm:1}{(Trigonometric Identity-1)}
Let $n>1$ be an odd number. Then
\begin{eqnarray}\label{eqn:odd1}
&&\sum_{\tau\in D(n-1)}\mathrm{sign}(\tau)\prod_{j=1}^{n-1}\frac{1}{1-\zeta^{j-\tau(j)}}=
\frac{(-1)^{\frac{n-1}{2}}}{n}\left(\frac{n-1}{2}!\right)^2.
\end{eqnarray}
\end{thm}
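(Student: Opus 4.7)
The plan is to interpret the LHS as an explicit determinant, embed the relevant matrix inside an $n\times n$ circulant whose spectrum is easy to read off via Fourier vectors, and then extract the minor through the adjugate.

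First, I would let $M$ denote the $(n-1)\times(n-1)$ matrix with $M_{jk}=\frac{1}{1-\zeta^{j-k}}$ for $j\ne k$ and $M_{jj}=0$, so that the Leibniz expansion
\[
\det M=\sum_{\sigma\in S_{n-1}}\operatorname{sign}(\sigma)\prod_{j=1}^{n-1}M_{j,\sigma(j)}
\]
kills every $\sigma$ with a fixed point (the diagonal zeros) and leaves only the derangement sum, which is exactly the left-hand side of \eqref{eqn:odd1}.

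Next, I would embed $M$ into the $n\times n$ matrix $\widehat C$ defined by the same recipe over a complete residue system mod $n$. The key elementary identity
\[
\frac{1}{1-\zeta^{m}}=-\frac{1}{n}\sum_{l=0}^{n-1} l\,\zeta^{lm},\qquad m\not\equiv 0 \pmod{n},
\]
obtained by differentiating $\sum_{l=0}^{n-1}z^{l}=\frac{z^{n}-1}{z-1}$ and substituting $z=\zeta^{m}$, lets one compare $\widehat C$ with the circulant $\widetilde C_{jk}=-\frac{1}{n}\sum_{l} l\,\zeta^{l(j-k)}$; these two matrices agree off-diagonal, while $\widetilde C_{jj}=-\frac{n-1}{2}$, so $\widehat C=\widetilde C+\frac{n-1}{2}I$. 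The Fourier vectors $v_{m}=(\zeta^{jm})_{j=0}^{n-1}$ simultaneously diagonalize every $n\times n$ circulant, and a one-line computation gives $\widetilde C\,v_{m}=-m\,v_{m}$. Consequently $\widehat C$ has integer eigenvalues $\mu_{m}=\frac{n-1}{2}-m$ for $m=0,\dots,n-1$; because $n$ is odd, exactly one eigenvalue ($\mu_{(n-1)/2}=0$) vanishes and $\widehat C$ is a normal matrix of rank $n-1$.

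Finally, to pull $\det M$ out of $\widehat C$, I would perturb by $\epsilon I$ and let $\epsilon\to 0$ in $\operatorname{adj}(\widehat C+\epsilon I)=(\widehat C+\epsilon I)^{-1}\det(\widehat C+\epsilon I)$. Using the spectral decomposition this yields
\[
\operatorname{adj}(\widehat C)=\Bigl(\prod_{m\ne (n-1)/2}\mu_{m}\Bigr)\,u u^{*},\qquad u=\frac{1}{\sqrt{n}}v_{(n-1)/2}.
\]
Every principal diagonal entry of $uu^{*}$ is $\frac{1}{n}$ (by circulant symmetry), and the $(1,1)$-entry of $\operatorname{adj}(\widehat C)$ equals $\det M$. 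Pairing each factor $\frac{n-1}{2}-m$ with its sign-reversed counterpart across the missing index $m=\frac{n-1}{2}$ collapses the remaining product to $(-1)^{(n-1)/2}\bigl(\frac{n-1}{2}!\bigr)^{2}$, giving
\[
\det M=\frac{1}{n}\prod_{m\ne (n-1)/2}\mu_{m}=\frac{(-1)^{(n-1)/2}}{n}\Bigl(\tfrac{n-1}{2}!\Bigr)^{\!2},
\]
which is exactly \eqref{eqn:odd1}. The main obstacle is installing the rank-one adjugate formula for a singular normal matrix cleanly, matching the indexing of $M$ with the principal submatrix of $\widehat C$, and doing all of this without invoking the eigenvector-eigenvalue identity; once this scaffolding is in place the numerical computation is a routine factorial exercise.
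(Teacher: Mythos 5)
Your argument is correct, and it takes a genuinely different route from the paper's dedicated proof of Theorem \ref{thm:1}. The paper's Section \ref{sec:integerEigen} proof stays in dimension $n-1$: it rescales $A$ by the diagonal matrix $B_1$ (with $\det B_1=n$), and shows via the telescoping sums of Lemma \ref{lem:lm1} that $C_1=AB_1$ acts block-triangularly on the partial Fourier basis $\{u_s\}_{s\neq (n-1)/2}$, so its eigenvalues are exactly $\pm1,\dots,\pm\tfrac{n-1}{2}$; no adjugate or spectral projection appears. You instead pass to the full $n\times n$ zero-diagonal circulant $\widehat C$, diagonalize it outright using the identity $\frac{1}{1-\zeta^m}=-\frac1n\sum_{l}l\,\zeta^{lm}$ (which packages the content of \eqref{eqn:l1} in one stroke), and then recover the principal $(n-1)\times(n-1)$ minor from the rank-one formula $\operatorname{adj}(\widehat C)=\bigl(\prod_{\mu_k\neq0}\mu_k\bigr)uu^{*}$ with $|u_j|^2=\frac1n$. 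This is much closer in spirit to the paper's Section 2 (Theorem \ref{thm:det}): in effect you reprove, for circulants with one vanishing Fourier coefficient, the special case of the eigenvector--eigenvalue identity at the zero eigenvalue, but self-containedly via the $\epsilon$-perturbed adjugate rather than by citing Theorem \ref{EigenIdentity} as a black box. What your route buys is a cleaner, fully spectral bookkeeping that immediately yields the general minor formula \eqref{eqn:minor}; what the paper's Section 3 route buys is an argument that never touches adjugates or spectral projections at all, only the integer eigenvalues of the (generally non-normal) matrix $C_1$, which is precisely the ``independence'' the authors were after. All the computational steps you sketch (the Leibniz/derangement identification, the derivative identity, $\widetilde C v_m=-mv_m$, the indexing of the deleted row/column, and the product $(-1)^{(n-1)/2}\bigl(\tfrac{n-1}{2}!\bigr)^2$) check out.
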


\begin{thm}\label{thm:2}{(Trigonometric Identity-2)}
Let $n>1$ be an odd number. Then
\begin{eqnarray}\label{eqn:odd2}
&&\sum_{\tau\in D(n-1)}\mathrm{sign}(\tau)\prod_{j=1}^{n-1}\frac{1+\zeta^{j-\tau(j)}}{1-\zeta^{j-\tau(j)}}=
(-1)^{\frac{n-1}{2}}\frac{((n-2)!!)^2}{n}.
\end{eqnarray}
\end{thm}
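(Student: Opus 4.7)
The plan is to interpret the left-hand side of~(\ref{eqn:odd2}) as the determinant of an $(n-1)\times(n-1)$ matrix, embed that matrix as a principal submatrix of an $n\times n$ circulant matrix (following Petrov's suggestion), and then exploit the Fourier diagonalisation of the circulant extension to compute the determinant.

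Set $g(m):=(1+\zeta^m)/(1-\zeta^m)$ for $m\not\equiv 0\pmod n$ and $g(0):=0$. The Leibniz expansion of $\det\tilde M$, where $\tilde M$ is the $(n-1)\times(n-1)$ matrix with $\tilde M_{jk}:=g(j-k)$, collapses (because of the vanishing diagonal) to a sum over derangements and equals the left-hand side of~(\ref{eqn:odd2}). I would then embed $\tilde M$ as the principal $(n-1)\times(n-1)$ submatrix of the $n\times n$ circulant $M:=(g(j-k))_{j,k\in\mathbb Z/n\mathbb Z}$. Using the closed form $g(m)=i\cot(\pi m/n)$, the function $g$ is purely imaginary and odd, so $M$ is Hermitian; being circulant, it is diagonalised by the Fourier basis $v_l=n^{-1/2}(\zeta^{jl})_j$, with eigenvalues $\lambda_l=\sum_{m=1}^{n-1}g(m)\zeta^{-lm}$. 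A short calculation using the partial-fraction decomposition $g(m)=-1-2/(\zeta^m-1)$ together with the classical identity $\sum_{m=1}^{n-1}1/(\zeta^m-1)=-(n-1)/2$ (obtained by letting $z\to 1$ in $\sum_{m=0}^{n-1}1/(z-\zeta^m)=nz^{n-1}/(z^n-1)$) yields $\lambda_0=0$ and $\lambda_l=n-2l$ for $l=1,\dots,n-1$. Thus the nonzero eigenvalues of $M$ are precisely the odd integers $\pm 1,\pm 3,\dots,\pm(n-2)$, whose product equals $(-1)^{(n-1)/2}((n-2)!!)^2$.

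The spectral resolution $M=\sum_{l=1}^{n-1}\lambda_l v_lv_l^*$ restricts, by truncating each $v_l$ to its first $n-1$ coordinates, to the factorisation $\tilde M=\tilde V\Lambda'\tilde V^*$ where $\Lambda'=\mathrm{diag}(\lambda_1,\dots,\lambda_{n-1})$ and $\tilde V$ is the resulting $(n-1)\times(n-1)$ matrix; hence $\det\tilde M=|\det\tilde V|^2\prod_{l=1}^{n-1}\lambda_l$. To evaluate $|\det\tilde V|^2$, pull out the scalar $n^{-1/2}$ from each column to obtain $\tilde W$; a direct computation gives $\tilde W\tilde W^*=nI-J$, where $J$ is the $(n-1)\times(n-1)$ all-ones matrix, whose eigenvalues are $1$ (simple) and $n$ (with multiplicity $n-2$), so $\det(\tilde W\tilde W^*)=n^{n-2}$ and hence $|\det\tilde V|^2=1/n$. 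Combining everything yields $\det\tilde M=(-1)^{(n-1)/2}((n-2)!!)^2/n$, as required.

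The main obstacle is the explicit eigenvalue formula $\lambda_l=n-2l$; this is the crux of the argument and depends on the partial-fraction decomposition of $g(m)$ together with the classical cotangent sum. Once the eigenvalues are known, the factorisation $\det\tilde M=|\det\tilde V|^2\det\Lambda'$ and the identity $\tilde W\tilde W^*=nI-J$ close the argument without any appeal to the eigenvector-eigenvalue identity.
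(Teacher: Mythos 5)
Your proposal is correct, and it closes the argument by a route that is genuinely different from the one the paper spells out. The paper offers two proofs: a generalization (Theorem~\ref{thm:det}) that embeds the matrix in a circulant and then invokes the eigenvector--eigenvalue identity of Denton--Parke--Tao--Zhang, and the ``another proof'' of Section~\ref{sec:integerEigen}, which avoids that identity by passing to the non-Hermitian product $C_1'=A'B_1$, using Lemma~\ref{lem:lm1} to show $C_1'$ acts block-triangularly on the Fourier vectors $u_s$, and reading off the integer eigenvalues (with the details for \eqref{eqn:odd2} left to the reader). You share the circulant/Fourier skeleton of the first route --- your eigenvalue formula $\lambda_l=n-2l$ is exactly the content of the paper's Lemma~\ref{lem:lm1}, and your ``short calculation'' from $\sum_{m=1}^{n-1}1/(\zeta^m-1)=-(n-1)/2$ does require the same telescoping step $\zeta^{sm}/(\zeta^m-1)-\zeta^{(s-1)m}/(\zeta^m-1)=\zeta^{(s-1)m}$ used there, so make that explicit --- but where the paper would then cite the eigenvector--eigenvalue identity, you replace it with the elementary factorization $\tilde M=\tilde V\Lambda'\tilde V^*$ and the Gram computation $\tilde W\tilde W^*=nI-J$, giving $|\det\tilde V|^2=1/n$. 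In effect you give a self-contained proof of precisely the special case of that identity needed here (vanishing eigenvalue of a Hermitian circulant), which is arguably cleaner than the paper's Section~\ref{sec:integerEigen} argument since you work with the Hermitian matrix directly rather than with a similarity of a non-normal product; the paper's triangularization, on the other hand, yields the integer spectrum of the non-Hermitian matrices $C_s$ as a by-product. One small point of hygiene: the closed form $g(m)=i\cot(\pi m/n)$ presumes $\zeta=e^{2\pi i/n}$; for a general primitive root you should instead just note $\overline{g(m)}=g(-m)=-g(m)$ (multiply numerator and denominator by $\zeta^{m}$), which is all you use, and all the root-of-unity sums you invoke hold for any primitive $\zeta$.
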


In this paper, we generalize Theorems~\ref{thm:1} and~\ref{thm:2} to a more general form based on circulant matrices. Furthermore, we give a different technique for proving the above identities without using the eigenvector-eigenvalue identity~\eqref{eqn:EigenEqual} and solve for the integer eigenvalues of some related non-Hermitian (and unnecessarily normal) matrices.

\section{Generalization by Circulant Matrices}

Fix an integer~$n>1$. Let $f:\mathbb{Z}\rightarrow\mathbb{C}$ be a function satisfying the following conditions:
\begin{itemize}
\item[(i)] $f(i)=f(j)$ if $i\equiv j~\pmod{n}$.
\item[(ii)] There exist an $s\in\mathbb{Z}$ and an $n$-th root of unity $\zeta\in\mathbb{C}$ such that $$\sum_{k=0}^{n-1}f(k)\zeta^{-ks}=0.$$
\item[(iii)] For any $i,j\in\{1,2,\cdots,n\}$, $$\sum_{k=1}^{n}f(i-k)\overline{f(j-k)}=\sum_{k=1}^{n}\overline{f(k-i)}f(k-j),$$
where $\overline{a}$ denotes the complex conjugate of any $a\in\mathbb{C}$.
\end{itemize}

In other words, $f$ mapped from the rational integer domain is assumed to be a periodic function with period $n$ and has at least one coefficient of its discrete Fourier transform equal to zero. Furthermore, we require $f$ to have a symmetric property (condition (iii) above) which is clearly satisfied if $f(k)=\overline{f(-k)}$ for any integer~$k$. Let $M=(m_{ij})_{1\le i,j\le n}$ denote the $n\times n$ matrix with $m_{ij}=f(i-j)$, \ie, $M$ be a circulant matrix given by $f$. Then we have the following theorem about the determinant of the $(n-1)\times(n-1)$ minor $M_j$ of $M$.

\begin{thm}\label{thm:det}{(Determinants of Minors of Circulant Matrices)}
Let $n>1$ be an integer and $f$ a function satisfying conditions~(i-iii) above. Then for any minor $M_j$ of $M=(f(i-k))_{1\le i,k\le n}$ obtained by deleting the $j$-th row and column from the
matrix $M$, we have
\begin{eqnarray}\label{eqn:minor}
\det{M_j}=\frac{1}{n}\prod_{i=1,i\neq s}^n\lambda_i(M),
\end{eqnarray}
where $(\lambda_i(M))_{1\le i\le n}$ are the eigenvalues of $M$ given by
\begin{eqnarray}\label{eqn:eigen}
\lambda_i(M)=\sum_{k=0}^{n-1}f(k)\zeta^{-ki}.
\end{eqnarray}
\end{thm}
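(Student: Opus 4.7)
My plan is to exploit two classical facts about the circulant matrix $M$: it is simultaneously diagonalized by the discrete Fourier transform vectors, making it a normal matrix whose eigenvectors have constant modulus, and one of its eigenvalues is forced to be zero by condition (ii). Applying the eigenvector-eigenvalue identity cited in the introduction to this distinguished eigenvector then gives the desired formula for $\det M_j$ essentially for free.

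First I would verify that the vectors $v_i = (1, \zeta^i, \zeta^{2i}, \ldots, \zeta^{(n-1)i})^T$ are eigenvectors of $M$ by a direct computation using the $n$-periodicity from condition (i): $(Mv_i)_k = \sum_{j=1}^n f_n(k-j)\zeta^{ij} = \zeta^{ik}\sum_{m=0}^{n-1} f_n(m)\zeta^{-im}$, which gives the stated eigenvalue $\lambda_i(M) = \sum_{k=0}^{n-1} f_n(k)\zeta^{-ki}$. The normalized eigenvector $u_i := v_i/\sqrt{n}$ then satisfies $|u_{i,j}|^2 = 1/n$ for every $j$. Condition (iii) is precisely the entry-wise statement $MM^* = M^*M$, so $M$ is normal (this is in fact automatic for any circulant matrix, but recording it here makes the appeal to the eigenvector-eigenvalue identity transparent), and condition (ii) yields $\lambda_s(M) = 0$.

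The main step is to specialize the eigenvector-eigenvalue identity
\[
|u_{i,j}|^2 \prod_{k \ne i}\bigl(\lambda_i(M) - \lambda_k(M)\bigr) = \det\bigl(\lambda_i(M) I_{n-1} - M_j\bigr)
\]
to $i = s$. The left-hand side then becomes $\tfrac{1}{n}\prod_{k\ne s}(-\lambda_k(M)) = \tfrac{(-1)^{n-1}}{n}\prod_{k\ne s}\lambda_k(M)$, while the right-hand side becomes $\det(-M_j) = (-1)^{n-1}\det M_j$; equating them and canceling the sign yields \eqref{eqn:minor}. I expect the only real work to be the eigenvector verification (a standard circulant computation) and the clean bookkeeping of the role of each hypothesis, with (i) supplying periodicity, (ii) supplying the zero eigenvalue, and (iii) supplying normality. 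A self-contained alternative that bypasses the eigenvector-eigenvalue identity is to observe that cyclic relabeling of indices makes $\det M_j$ independent of $j$, and then to read off $\sum_{j=1}^n \det M_j = \prod_{k\ne s}\lambda_k(M)$ from the coefficient of $x$ in $\det(xI - M) = \prod_i (x - \lambda_i(M))$; division by $n$ recovers the same conclusion.
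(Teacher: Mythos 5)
Your proof is correct and follows essentially the same route as the paper: verify that the Fourier vectors diagonalize the circulant $M$ with eigenvalues \eqref{eqn:eigen}, use condition (iii) for normality and condition (ii) for the vanishing eigenvalue $\lambda_s(M)=0$, and apply the eigenvector-eigenvalue identity at $i=s$ with $|u_{s,j}|^2=1/n$ (your sign bookkeeping is in fact more explicit than the paper's). The identity-free alternative you sketch at the end --- all $\det M_j$ equal by cyclic symmetry, with $\sum_j \det M_j=\prod_{k\neq s}\lambda_k(M)$ read off from the coefficient of $x$ in the characteristic polynomial --- is a genuinely different and valid argument the paper does not pursue, but your main line matches the paper's proof.
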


\begin{proof}
Since $M$ is circulant (condition (i) above), we have that for any column vector $$v_i=\frac{1}{\sqrt{n}}(\zeta^{i},\zeta^{2i},\cdots,\zeta^{ni})'$$ and any $k\in\{1,2,\cdots,n\}$ \cite{Petrov},
\begin{eqnarray}\label{eqn:eigenVector}
\frac{1}{\sqrt{n}}\sum_{j=1}^n f(k-j)\zeta^{ji}=\frac{1}{\sqrt{n}}\sum_{j=0}^{n-1} f(j)\zeta^{-ji}\zeta^{ki}.
\end{eqnarray}
So $\sum_{j=0}^{n-1}f(j)\zeta^{-ji}$ is an eigenvalue of $M$ whose eigenvectors $(v_i)_{1\le i\le n}$ form an orthonormal basis of $\mathbb{C}^n$.

By condition (iii), $MM^*=M^*M$ and $M$ is normal. We can thus apply the eigenvector-eigenvalue equality~\eqref{eqn:EigenEqual} to $M$. Now the proof of~\eqref{eqn:minor} is completed by choosing $\lambda_i(M)=0$ which exists by condition~(ii).
\begin{thm}\label{EigenIdentity}{(Eigenvector-eigenvalue identity~\cite{DentonEtal})}
Let~$M$ be an $n\times n$ normal matrix and~$M_j$ its minor obtained by deleting the $j$-th row and column from the matrix $M$. Let $\lambda_i(A)$ denote the $i$-th eigenvalue of any matrix~$A$ and~$v_i=(v_{ij})_{1\le j\le n}$ a unit eigenvector of~$M$ associated to $\lambda_i(M)$. Then we have
\begin{eqnarray}\label{eqn:EigenEqual}
|v_{ij}|^2\prod_{k=1;k\neq i}^n(\lambda_i(M)-\lambda_k(M))=\prod_{k=1}^{n-1}(\lambda_i(M)-\lambda_k(M_j)).
\end{eqnarray}
\end{thm}

\end{proof}

Now we prove that Theorems~\ref{thm:1} and~\ref{thm:2} are special cases of Theorem~\ref{thm:det} with proper selections of $f$ satisfying conditions~(i-iii). Specifically, we choose the following~$f$ for Theorems~\ref{thm:1}:
\begin{eqnarray}
f(k)=\begin{cases}
		\frac{1}{1-\zeta^k}, & k\in\{1,\cdots,n-1\} \\
		0, & k=0
	\end{cases}.
\end{eqnarray}
By~\cite{CalogeroPerelomov} or~\eqref{eqn:eigenV1} in~Lemma~\ref{lem:lm1} in the appendix, we have  $$\sum_{k=0}^{n-1}f(k)\zeta^{\frac{k(n+1)}{2}}=0.$$ So conition (ii) is satisfied. Conditions (i) and (iii) are clearly satisfied since $f(k)=\overline{f(-k)}$. Using~\eqref{eqn:eigen} and~\eqref{eqn:eigenV1} in~Lemma~\ref{lem:lm1} again we obtain that the eigenvalues of $M$ are $\{-\frac{n-1}{2},\cdots,-1,0,1,\cdots,\frac{n-1}{2}\}$. Choosing $j=n$ and $s=-\frac{(n+1)}{2}$ finishes the proof of Theorems~\ref{thm:1} immediately since $\det(M_n)$ equals to the left-hand side of~\eqref{eqn:odd1}. The proof of Theorem~\ref{thm:2} follows similarly by choosing $j=n$, $s=0$ and
\begin{eqnarray}
f(k)=\begin{cases}
		\frac{1+\zeta^k}{1-\zeta^k}, & k\in\{1,\cdots,n-1\} \\
		0, & k=0
	\end{cases}.
\end{eqnarray}
Note that now $M$ has eigenvalues $\{-(n-2),-(n-4),\cdots,-3,-1,0,1,3,\cdots,n-4,n-2\}$ by~\eqref{eqn:eigenV1} in~Lemma~\ref{lem:lm1}.

A more general example is given by
\begin{eqnarray}\label{eqn:findABC}
f(k)=\begin{cases}
		\frac{a+b\zeta^{ck}}{1-\zeta^k}, & k\in\{1,\cdots,n-1\} \\
		0, & k=0
	\end{cases},
\end{eqnarray}
where $a,b,c\in\mathbb{Z}$ are chosen to satisfy condition (ii) and recall that $\zeta\in\mathbb{C}$ is a {\it primitive} $n$-th root of unity.  Conditions (i) and (iii) are clearly satisfied since $f(k)=\overline{f(-k)}$. To find $a,b,c$ satisfying condition (ii), we investigate the solvability of the following equation on $s$:
\begin{eqnarray}\label{eqn:abcEquiv}
a\{s\}+b\{c+s\}=(a+b)\frac{n+1}{2},
\end{eqnarray}
where
\begin{eqnarray}
\{s\}:=\begin{cases}
		k, & k\in\{1,\cdots,n-1\} ~~\&~~ s\equiv k \pmod n \\
		n, & s\equiv 0 \pmod n 
	\end{cases}.
\end{eqnarray}
From~\eqref{eqn:eigenV1} in~Lemma~\ref{lem:lm1}, if~\eqref{eqn:abcEquiv} is solvable in $s\in\mathbb{Z}$, then there exists an integer~$s$ such that $$\sum_{k=0}^{n-1}f(k)\zeta^{ks}=0$$ and  condition (ii) is satisfied. For example, we can choose $a=b=1$ and
\begin{eqnarray}
c=\begin{cases}
		\frac{n-1}{2}, & n\equiv 1 \pmod 4 \\
		\frac{n+1}{2}, & n\equiv 3 \pmod 4 
	\end{cases}.
\end{eqnarray}
Then~\eqref{eqn:abcEquiv} is solved by
\begin{eqnarray}
s=\begin{cases}
		\frac{n+3}{4}, &  n\equiv 1 \pmod 4 \\
		\frac{n+1}{4}, & n\equiv 3 \pmod 4 
	\end{cases}.
\end{eqnarray}

\section{Another Proof of Theorems~\ref{thm:1} and~\ref{thm:2}}\label{sec:integerEigen}

Now we proceed to give a proof of Theorems~\ref{thm:1} and~\ref{thm:2} that is independent of the eigenvector-eigenvalue equality~\eqref{eqn:EigenEqual}.

Let $A$ denote the $(n-1)\times (n-1)$ Hermitian matrix with diagonal elements equal to zero and off-diagonal elements $(a_{ij})_{i\neq j}$ given by
\begin{eqnarray}\nn
\frac{1}{1-x_{i-j}}, \quad\quad 1\leq i\neq j\le n-1,
\end{eqnarray}
where
\begin{eqnarray}\nn
x_k=
\zeta^k, & \forall~k.
\end{eqnarray}
Clearly the left-hand side of \eqref{eqn:odd1} is equal to $\det(A)$. It is convenient to multiply~$A$ from the right by the diagonal matrix
$B_s$ whose diagonal entries are given as
\begin{eqnarray}\nn
1-x_{is}, \quad\quad 1\leq i\le n-1,
\end{eqnarray}
by fixing any $s\in\{-\frac{n-1}{2},-\frac{n-3}{2},\cdots,-1,1,2,\cdots,\frac{n-1}{2}\}$.
Define $C_s\defeq AB_s$. If $n$ is prime or $s=1$, we have
\begin{eqnarray}\label{eqn:keyEq}
\det(C_s)=\det(AB_s)=\det(A)\det(B_s)=n\det(A).
\end{eqnarray}
To see this, note that if $n$ is prime, then $\{is\}_{0\le i\le n-1}$ form a complete residue system modulo~$n$ for any fixed~$s\in\{-\frac{n-1}{2},-\frac{n-3}{2},\cdots,-1,1,2,\cdots,\frac{n-1}{2}\}$. Therefore, $\{1-x_{is}\}_{1\le i\le n-1}\bigcup\{0\}$ form exactly the complete solution set of the following equation:
\begin{eqnarray}\label{eqn:simplePoly1}
(1-x)^n=1.
\end{eqnarray}
After cancelling $1$ and the trivial term $x$ (for root~$0$) from both sides, the remaining equation (note that~$n$ is odd) $x^{n-1}+\cdots+n=0$ has solutions exactly given by $\{1-x_{is}\}_{1\le i\le n-1}$ and their product equals the constant~$n$ in the left-hand side of the equation. Equivalently, we have 
\begin{eqnarray}
\det{B_s} = \prod_{1\le i\le n-1}(1-x_{is}) = n.
\end{eqnarray}
Note that if $n$ is prime, then $(x_{is})_{1\le i\le n-1}$ is a permutation of $(x_i)_{1\le i\le n-1}$ for any fixed $s\in\left\{-\frac{n-1}{2},\cdots,-1,1,\cdots,\frac{n-1}{2}\right\}$. In this case $is_1\not\equiv is_2~\pmod{n}$ if $s_1\not\equiv s_2~\pmod{n}$. For $s=1$ but~$n$ not necessarily prime, we note that $\{1-x_{is}\}_{1\le i\le n-1}\bigcup\{0\}$ still form exactly the complete solution set of~\eqref{eqn:simplePoly1} since~$\zeta$ is primitive. So $\det{B_1}=n$ by the argument above. 

For general odd $n>1$, identity~\eqref{eqn:odd1} is equivalent to the following identity:
\begin{eqnarray}\label{eqn:keyEq1}
\det(C_1)=\det(AB_1)=\det(A)\det(B_1)=(-1)^{\frac{n-1}{2}}\left(\frac{n-1}{2}!\right)^2.
\end{eqnarray}

To analyze~\eqref{eqn:keyEq1}, we use \eqref{eqn:l1} in Lemma~\ref{lem:lm1} to obtain the following identity for any fixed $s\in\left\{0,1,2,\cdots,\frac{n-3}{2},\frac{n+1}{2},\cdots,n-1\right\}$ and $k\in\{1,2,\cdots,n-1\}$:
\begin{eqnarray}
\sum^{n-1}_{j=1,j\neq k}\frac{1-x_{j(\frac{n-1}{2}-s)}}{1-x_{k-j}}x_{js}&=&\sum^{n-1}_{j=1,j\neq k}\frac{x_{js}-x_{j\frac{n-1}{2}}}{1-x_{k-j}}\\
&=&\left(\frac{n-1}{2}-s\right)x_{ks}.
\end{eqnarray}
Note that the above equality also holds for $s=\frac{n-1}{2}$, but we do not need this fact. It shows that $s$ is an eigenvalue of $C_{s}$ for each $s\in\left\{-\frac{n-1}{2},\cdots,-1,1,\cdots,\frac{n-1}{2}\right\}$. If we can show that $s$ is also an eigenvalue of $C_{1}$, then
\begin{eqnarray}\label{eqn:C1}
\det(C_1)=\prod_{-\frac{n-1}{2}\le s\le \frac{n-1}{2},s\neq0} s= (-1)^{\frac{n-1}{2}}\left(\frac{n-1}{2}!\right)^2.
\end{eqnarray}

To prove that $C_1$ has all the required integer eigenvalues, we construct a basis of the $(n-1)$-dimensional vector space on the complex field $\mathbb{C}$ by using the Fourier transform \cite{LiuPetrov}.
Denote by $u_j~(j=0,1,\cdots,n-1)$ the column-vector with coordinates $(x_{ji})_{1\le i\le n-1}$. Note that
\begin{eqnarray}\label{eqnn:FourierDep}
u_0+u_1+\cdots+u_{n-1}=0.
\end{eqnarray}
Furthermore, any $(n-1)$ vectors $u_j$'s form a $(n-1)*(n-1)$ minor of the orthogonal basis for the classical Fourier vector transform in the $n$-dimensional complex space $\mathbb{C}^n$. So the $n-1$ vectors are linearly independent and form a basis of the $(n-1)$-dimensional complex space $\mathbb{C}^{n-1}$, for otherwise the original full $n*n$ Fourier matrix would not have the full rank according to~\eqref{eqnn:FourierDep}. Let us consider the specific basis $\{u_0,u_1,\cdots,u_{n-1}\}\backslash\{u_{(n-1)/2}\}$. According to \eqref{eqn:l1} in Lemma~\ref{lem:lm1}, we have that
\begin{eqnarray}
C_1u_s=\begin{cases}
		\left(\frac{n-1}{2}-s\right)u_s-\left(\frac{n-1}{2}-s-1\right)u_{s+1}, & s\in\{0,1,\cdots,n-2\} \\
		-\frac{n-1}{2}u_{n-1}-\frac{n-1}{2}u_0, & s=n-1
	\end{cases}.
\end{eqnarray}
By ignoring the case of $s=\frac{n-1}{2}$ in the above, $C_1$ can be rewritten in the basis $\{u_0,u_1,\cdots,u_{n-1}\}\backslash\{u_{(n-1)/2}\}$ with the form
\begin{equation*}
\begin{pmatrix}
X & Y \\
0 & Z
\end{pmatrix},
\end{equation*}
where $X$ and $Z$ are $\frac{n-1}{2}\times\frac{n-1}{2}$  lower-triangular matrices with specific forms given by
\begin{equation*}
X=\begin{pmatrix}
\frac{n-1}{2} &  & & & &\\
-\frac{n-3}{2} & \frac{n-3}{2} & & & &\\
&-\frac{n-5}{2} & \frac{n-5}{2} & & &\\
& & \cdot & \cdot & &\\
& &  & \cdot & \cdot &\\
& & & &-1 & 1
\end{pmatrix}
\end{equation*}
and
\begin{equation*}
Z=\begin{pmatrix}
-1 &  & & & &\\
2 & -2 & & & &\\
&3 & -3 & & &\\
& & \cdot & \cdot & &\\
& &  & \cdot & \cdot &\\
& & & &0 & -\frac{n-1}{2}
\end{pmatrix}.
\end{equation*}
Based on the above forms and the fact that similar matrices share the same set of eigenvalues, the eigenvalues of $C_1$ are those of $X$ and $Z$ combined, which are exactly $\{-\frac{n-1}{2},\cdots,-1,1,\cdots,\frac{n-1}{2}\}$. We point out that the elements of~$Y$ are all zero except the top-right corner equal to $-\frac{n-1}{2}$. 

Now the proof of~\eqref{eqn:odd2} follows a similar process as above by applying \eqref{eqn:l2} in Lemma~\ref{lem:lm1} to the corresponding matrix $C'_1=A'B_1$ and considering the basis $\{u_1,\cdots,u_{n-1}\}$. Specifically, Let~$A'$ denote the $(n-1)\times (n-1)$ Hermitian matrix with diagonal elements equal to zero and off-diagonal elements $(a_{ij})_{i\neq j}$ given by
\begin{eqnarray}\nn
\frac{1+x_{i-j}}{1-x_{i-j}}, \quad\quad 1\leq i\neq j\le n-1.
\end{eqnarray}
Clearly the left-hand side of \eqref{eqn:odd2} is equal to $\det(A')$. Furthermore, by applying \eqref{eqn:l2} in Lemma~\ref{lem:lm1}, we have that
\begin{eqnarray}
C_1'u_s=\begin{cases}
		(n-2s)u_s + (2s+2-n)u_{s+1}, & s\in\{1,\cdots,n-2\} \\
                     (2-n)u_{n-1}, & s=n-1\\
		(1-n)u_1, & s=0
	\end{cases}.
\end{eqnarray}
By ignoring the case of $s=0$ in the above, $C_1'$ can be rewritten in the basis $\{u_1,\cdots,u_{n-1}\}$ with the form
\begin{equation*}
Z=\begin{pmatrix}
n-2 &  & & & & & & & &\\
4-n & n-4 & & & & & & & &\\
& \cdot& \cdot & & & & & & &\\
& & \cdot& \cdot & & & & & &\\
& &  &  -1& 1 & & & & &\\
& & & & 1 & -1 & & & &\\
& & & & & \cdot& \cdot & & &\\
& & & & & & \cdot& \cdot & &\\
& & & & & & & \cdot &4-n &\\
& & & & & & & & n-2&2-n
\end{pmatrix}.
\end{equation*}
So the eigenvalues of $C_1'$ are exactly $\{-(n-2), -(n-4), \cdots, -1, 1, \cdots, n-4, n-2\}$ and~\eqref{eqn:odd2} is proved immediately.

\section{Appendix}
The following lemma is well-known (see, \eg \cite{Gessel,CalogeroPerelomov,Sun6}) and we give an induction proof here for easy connections to the proof in Sec.~\ref{sec:integerEigen}.
\begin{lem}\label{lem:lm1}
Define $x_k=\zeta^k$ where $k\in\mathbb{Z}$ and~$\zeta$ is a primitive $n$-th root of unity for $n>1$. For any integers $s\in\{0,1,\cdots,n-1\}$ and $k\in\{1,2,\cdots,n-1\}$, we have
\begin{eqnarray}\label{eqn:l1}
\sum^{n-1}_{j=1,j\neq k}\frac{x_{js}}{1-x_{k-j}}&=&\left(\frac{n-1}{2}-s\right)x_{ks}-\frac{1}{1-x_k},\\\label{eqn:l2}
\sum^{n-1}_{j=1,j\neq k}\frac{x_{js}}{1-x_{j-k}}&=&
\begin{cases}
		\left(s-\frac{n+1}{2}\right)x_{ks}-\frac{1}{1-x_{-k}}, & s>0 \\
		\frac{n-1}{2}-\frac{1}{1-x_{-k}}, & s=0
	\end{cases}.
\end{eqnarray}
\end{lem}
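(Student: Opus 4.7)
The plan is to induct on $s$ for both identities. For \eqref{eqn:l1}, denote its left-hand side by $L_s$. The base case $s=0$ follows from the substitution $m=k-j$, which converts the sum into $\sum_{m=1,\,m\neq k}^{n-1}\frac{1}{1-x_m}$, combined with the classical identity $\sum_{m=1}^{n-1}\frac{1}{1-\zeta^m}=\frac{n-1}{2}$ (obtainable, e.g., by logarithmic differentiation of $\prod_{m=1}^{n-1}(x-\zeta^m)=1+x+\cdots+x^{n-1}$ at $x=1$).

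For the inductive step, I would compute $L_{s+1}-x_k L_s$. The key algebraic simplification is
\[
\frac{x_j-x_k}{1-x_{k-j}}=\frac{x_k(x_{j-k}-1)}{1-x_{-(j-k)}}=x_j,
\]
where the last equality uses $\frac{y-1}{1-1/y}=y$ for $y=x_{j-k}$. Hence $L_{s+1}-x_k L_s=\sum_{j=1,\,j\neq k}^{n-1}x_{j(s+1)}$, which by orthogonality of $n$-th roots of unity equals $-1-x_{k(s+1)}$ whenever $s+1\in\{1,\ldots,n-1\}$. A direct algebraic check shows that the claimed right-hand side of \eqref{eqn:l1} satisfies the same recursion, so induction starting from $s=0$ propagates through $s\in\{0,1,\ldots,n-1\}$.

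For \eqref{eqn:l2}, letting $L'_s$ denote its left-hand side, the same strategy works with the analogous simplification $\frac{x_j-x_k}{1-x_{j-k}}=-x_k$. The base case $s=0$ again reduces to $\frac{n-1}{2}-\frac{1}{1-x_{-k}}$ via the same roots-of-unity sum. The only delicate point is the transition $s=0\to s=1$: the sum $\sum_{j\neq k}x_{j\cdot 0}$ takes the exceptional value $n-2$ (rather than $-1-x_0$), yielding the recursion $L'_1=x_k L'_0-(n-2)x_k$. After applying $\frac{x_k-1}{1-x_{-k}}=x_k$, this matches the $s>0$ branch of the formula. For $s\ge 1$ the routine recursion $L'_{s+1}-x_k L'_s=x_k+x_{k(s+1)}$ takes over and completes the induction. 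The main (very mild) obstacle is precisely this change-of-shape at $s=0$ in \eqref{eqn:l2}; everything else is telescoping algebra driven by the single identity $\frac{y-1}{1-1/y}=y$ together with orthogonality of the $n$-th roots of unity.
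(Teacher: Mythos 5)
Your proof is correct and is essentially the paper's argument: both are telescoping inductions on $s$ driven by the identity $\frac{y-1}{1-1/y}=y$ (equivalently $\frac{x_j}{1-x_j}=\frac{1}{1-x_j}-1$) together with the classical evaluation $\sum_{m=1}^{n-1}\frac{1}{1-\zeta^m}=\frac{n-1}{2}$. The only organizational difference is that the paper first factors out $x_{ks}$ and shifts the summation index so that the $k$-dependence is reduced to a single subtracted term and the telescoping is done on the complete sums $\sum_{j=1}^{n-1}x_{js}/(1-x_{\mp j})$, which absorbs automatically the exceptional $s=0\to 1$ step that you handle separately in \eqref{eqn:l2}, whereas you induct directly on the $j\neq k$ sums via $L_{s+1}-x_kL_s$.
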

\begin{proof}
\begin{eqnarray}
\sum^{n-1}_{j=1,j\neq k}\frac{x_{js}}{1-x_{k-j}}&=&x_{ks}\sum^{n-1}_{j=1,j\neq k}\frac{x_{(j-k)s}}{1-x_{k-j}}\\
&=&x_{ks}\left[\sum^{n-1}_{j=1}\frac{x_{js}}{1-x_{-j}}-\frac{x_{-ks}}{1-x_k}\right].
\end{eqnarray}
Note that
\begin{eqnarray}
\sum^{n-1}_{j=1}\frac{x_{js}}{1-x_{-j}}&=&\sum^{n-1}_{j=1}\left[\frac{x_{j(s-1)}}{1-x_{-j}}+x_{js}\right]\\
&=&-1+\sum^{n-1}_{j=1}\frac{x_{j(s-1)}}{1-x_{-j}}\\
&=&\cdots\\
&=&-s+\sum^{n-1}_{j=1}\frac{1}{1-x_{-j}}\\\label{eqn:s1}
&=& \frac{n-1}{2}-s.
\end{eqnarray}
The last equality is obtained by comparing the coefficient of $x^{n-2}$ in the following polynomial equation after cancelling the trivial term $x^n$ and dividing by $n$ on both sides:
\begin{eqnarray}
\left(1-\frac{1}{x}\right)^n=1.
\end{eqnarray}
The case of $s=0$ is thus trivial in \eqref{eqn:l2}. For $s>0$, we only need to observe that
\begin{eqnarray}
\sum^{n-1}_{j=1}\frac{x_{js}}{1-x_{j}}&=&\sum^{n-1}_{j=1}\left[\frac{x_{j(s-1)}}{1-x_{j}}-x_{j(s-1)}\right]\\
&=&1+\sum^{n-1}_{j=1}\frac{x_{j(s-1)}}{1-x_{j}}\\
&=&\cdots\\
&=&s-1+\sum^{n-1}_{j=1}\left[\frac{1}{1-x_{j}}-1\right]\\\label{eqn:s2}
&=&s-\frac{n+1}{2}.\label{eqn:eigenV1}
\end{eqnarray}
\end{proof}

\vspace{2em}

\section*{Declarations}
\begin{itemize}
    \item {\bf Ethics approval and consent to participate} Not applicable
    \item{\bf Consent for publication} Yes
     \item {\bf Availability of data and material} Available upon request
    \item{\bf Competing interests} Not applicable
     \item {\bf Funding} Not applicable
    \item{\bf Authors' contributions} The paper has a sole author responsible for all contributions.
     \item {\bf Data Availability} Available upon request
    \item{\bf Acknowledgements} The author is deeply indebted to Prof. Fedor Petrov from
Saint Petersburg State University for some key discussions on Fourier analysis in circulant matrices and to Prof. Xuejun Guo and Prof. Zhi-Wei Sun from Nanjing University for some very helpful suggestions. We also thank the anonymous reviewers for the highly valuable comments that helped improve this paper.
\end{itemize}

\vspace{2em}

%%%%%%%%%%%%%%%%%%%%
%%%%%%%%%%%%%%%%%%%%%%


\begin{thebibliography}{0}

\bibitem{CalogeroPerelomov} F. Calogero and A.M. Perelomov, {\sl Some Diophantine Relations Involving Circular Functions of Rational Angles}, {Lin. Alg. Appl.} 25, 91-94 (1979).

\bibitem{DentonEtal} P. B. Denton, S. J. Parke, T. Tao, and X. Zhang, {\sl Eigenvectors from eigenvalues: a survey of a basic identity in linear algebra}, Bulletin of the American Mathematical Society 59(1), 31-58 (2022).

\bibitem{Gessel} I. Gessel, {\sl Generating Functions and Generalized Dedekind Sums}, {Electronic J. Combin.} 4(2), 137-153 (1997).

\bibitem{Guo} X. Guo, X. Li, Z. Tao, and T. Wei, {\sl The Eigenvectors-Eigenvalues Identity and Sun's Conjectures on Determinants and Permanents}, {Linear Multilinear A.} 72(7), 1071–1077 (2022).

\bibitem{LiuPetrov} F. Petrov, {\sl Answer to Integer Eigenvalues of a Class of Matrices Inspired by Prof. Zhi-Wei Sun's Conjecture}, {https://mathoverflow.net/q/424236,} June, 2022.

\bibitem{Petrov} F. Petrov, {\sl Email Communications}, June, 2022.

\bibitem{Sun1} Z.-W. Sun, {\sl On Some Determinants with Legendre Symbol Entries}, {Finite Fields Appl.} 56, 285-307 (2019).

\bibitem{Sun2} Z.-W. Sun, {\sl On Some Determinants Involving the Tangent Function}, {Ramanujan J.} 64, 309–332 (2024).

\bibitem{Sun3} Z.-W. Sun, {\sl On Permutations of ${1,\ldots,n}$ and related topics}, { J. Algebraic Combin.} 54, 893–912, (2021).

\bibitem{Sun4} Z.-W. Sun, {\sl Arithmetic Properties of Some Permanents}, {preprint,} arXiv:2108.07723.

\bibitem{Sun5} Z.-W. Sun, {\sl New Conjectures in Number Theory and Combinatorics} (in Chinese), Harbin Institute of
Technology Press, Harbin (2021).

\bibitem{Sun6} H. Wang and Z.-W. Sun, {\sl Proof of a Conjecture Involving Derangements and Roots of Unity}, {Electronic J. Combin.} 30(2), 1-10 (2023).





\end{thebibliography}
\end{document}